\documentclass[a4paper,draft]{amsart}
 \usepackage{amsmath,amssymb,amsthm,latexsym}
 \tolerance 3000
 \begin{document}

   \begin{center}
    \LARGE \bfseries Bases for partially commutative Lie
    algebras\\[1.3em]
    \mdseries \large E.\,N.\, Poroshenko\\[0.2em]
         \itshape  Department of Algebra 
           and Mathematical Logic, \\
        Novosibirsk State Technical University,\\
        Novosibirsk, Russia\\
        (e-mail: auto\_stoper@ngs.ru)
   \end{center}
   \author{}
   \title{}
   \begin{abstract}
     In this paper, linear bases for the partially commutative Lie algebras are found.
     The method of the Gr\"{o}bner--Shirshov bases is used.
   \end{abstract}

   \maketitle

   \theoremstyle{definition}
   \newtheorem{ddd}{Definition}[section]
   \newtheorem{eee}[ddd]{Example}
   \theoremstyle{plain}
   \newtheorem{ttt}[ddd]{Theorem}
   \newtheorem{llll}[ddd]{Lemma}
   \theoremstyle{remark}
   \newtheorem*{rrr}{Remark}

   \section{Introduction and Preliminaries}

   Let
   $X$ be a finite set and let
   $G=\langle X,E\rangle$ be an undirected graph without loops
   with the set of vertices
   $X$ and the set of edges
   $E$. Since the graph
   $G$ is undirected the elements of
   $E$ are unordered pairs that we denote by
   $\{x,y\}$, where
   $x,y\in X$.
   \begin{ddd} \label{pkalgli}
      Let
      $R$  be a commutative associative ring with unit.
      A \emph{partially commutative Lie algebra over}
      $R$ is just the Lie
      $R$-algebra under the ring
      $R$ with the set of generators
      $X$ and the set of defining relations
      \begin{equation} \label{rel}
         (x_i,x_j)=0, \text{ for }\{x_i,x_j\} \in E
      \end{equation}
      (thereafter, we denote the Lie product of
      $x$ and
      $y$) by
      $(x,y)$.

      The graph
      $G$ is called a \emph{defining graph} for the corresponding algebra that we denote by
      $\mathcal{L}_{R}(G)$. If there is no ambiguity we omit the subscript and write
      $\mathcal{L}(G)$ instead of
      $\mathcal{L}_{R}(G)$.
  \end{ddd}

   So, the definition of the partially commutative Lie algebras is analogous to ones of other
   partially commutative structures such as groups, monoids etc. (see~%
 \cite{DK93}).

   Partially commutative groups (the term ``graph
   groups'' is also used) are studied very heavily nowadays (see~%
\cite{Ser89,DK92,She05,She06,DKR07,GT09,T10}, for example).
   Although, there are
   some results obtained for other partially commutative structures (see
\cite{KMNR80,BS01,DK92'}).

   For instance, it was shown in
\cite{KMNR80} that if two partially commutative associative
   algebras (they are called ``graph algebras'' in that paper) are isomorphic then so are
   their defining graphs.  Actually, in that paper, the partially
   commutative algebra corresponding to the graph
   $G\langle X,E\rangle$ is defined as an algebra with the set of generators
   $X$ and the set of defining relations
   $x_ix_j=x_jx_i$, where
   $\{x_i,x_j\}\not \in E$. It means that the associative partially commutative algebra corresponding to the graph
   $G$ has the defining graph
   $\overline{G}$ (in meaning of the terminology used nowadays), i.e. the complement of
   $G$. However, it makes no difference because, obviously,
   $G\simeq G'$ if and only if
   $\overline{G} \simeq \overline{G'}$.

   Some results are also obtained for partially commutative Lie
   algebras. So, in
\cite{DK92'}, the algorithm finding a basis for any partially
   commutative Lie algebra is given. However, this algorithm is based on a
   decomposition of the set
   $X$ by two subsets one of which is independent. For this
   reason, the final result depends essentially on the structure
   of the graph and, therefore, there are deep difficulties in applying this algorithm for
   the explicit description of the bases for partially commutative Lie algebras in general.

   The goal of this paper is to make up for a deficiency and find the bases for partially commutative
   Lie algebras explicitly. We are using totally different methods, namely the technique
   of the Gr\"{o}bner--Shirshov bases.

   For a partially commutative algebra, the basis found in this paper consists of Lyndon--Shirshov words.
   For this reason, we should remind the corresponding definitions.

   Let
   $X^*$ be the set of all associative words in the alphabet
   $X$ (including the empty word denoted by
   $1$). Let us set a linear order on
   $X$ and extend it to the linear order on
   $X^*$ by two different ways:
   \begin{enumerate}
       \item
         $u<1$ for any non-empty word
         $u$. By induction,
         $u<v$, if
         $u=x_iu'$,
         $v=x_jv'$, where
         $x_i,x_j\in X$, and either
         $x_i<x_j$ or
         $x_i=x_j$ and
         $u'<v'$. This is so called lexicographic order.
       \item
         $u\prec v$ if either
         $\ell (u)< \ell (v)$ or
         $\ell (u)= \ell (v)$ and
         $u<v$, where
         $\ell(u)$ is the length of
         $u$. This order is denoted by deg-lex.
     \end{enumerate}

   \begin{ddd}\label{alsw}
      An associative word
      $u\in X^*$ is called an \emph{associative Lyndon--Shirshov word}, if
      for an arbitrary non-empty words
      $v,w\in X^*$ such that
      $u=vw$, we have
      $wv<u$.
   \end{ddd}

   We also consider the set if all non-associative words in
   $X$ (here, we exclude the empty word from the consideration),
   i.e the set of words with all possible bracketings (let us denote this set by
   $X^{+}$). In the next definition, if
   $[u]$ is an arbitrary non-associative word then
   $u$ denotes the word obtained from
   $[u]$ by removing all brackets.
   \begin{ddd}\label{nalsw}
      \renewcommand{\theenumi}{\roman{enumi}}
      A non-associative word
      $[u]$ is called a \emph{Lyndon--Shirshov word} if
      \begin{enumerate}
         \item
           $u$ is an associative Lyndon-Shirshov word;
         \item
           if
           $[u]=([u_1],[u_2])$ then
           $[u_1]$ and
           $[u_2]$ are non-associative Lyndon--Shirshov words and
           $u_1>u_2$.
         \item
           If
           $[u_1]=([u_{11}],[u_{12}])$ then
           $u_2\geqslant u_{12}$.
      \end{enumerate}
   \end{ddd}
   Let us denote the set of all associative Lyndon--Shirshov words in
   $X$ by
   $LSA(X)$ and the set of all non-associative Lyndon--Shirshov words in
   the same alphabet by
   $LS(X)$.

   By
\cite{sh62'},
   $LS(X)$ is a linear basis of the free Lie algebra with the set in generators
   $X$ over an arbitrary field. We can easily conclude from this that this set is also a basis of the free
   Lie
   $R$-algebra. Let us denote this algebra by
   $\mathrm{Lie}_{R}(X)$.

   In
\cite{sh58}, it was shown that for any associative
   Lyndon--Shirshov word there is the unique bracketing making a non-associative
   Lyndon--Shirshov word. It means that there exists a bijection
   $[\,\cdot\,]: LSA(X)\rightarrow LS(X)$. So, from now on, we denote the
   image of
   $u\in LSA(X)$ under this bijection by
   $[u]$. Finally, for
   $[u],[v]\in LS(X)$ we write
   $[u]<[v]$
   ($[u]\prec [v]$) if
   $u<v$ (%
   $u\prec v$ respectively). In a similar manner, we understand the notations:
   $[u]\leqslant [v]$,
   $[u]\preceq [v]$,
   $[u]>[v]$,
   $[u]\succ [v]$,
   $[u]\geqslant [v]$,
   $[u]\succeq [v]$.

   Let
   $f=\sum_{i}\alpha_i [v_i]$ be a Lie polynomial that is a linear combination of
   non-associative Lyndon--Shirshov words. Denote by
   $\overline{f}$ the monomial
   $\alpha_k [v_k]$ such that
   $[v_k]\succ [v_i]$ for any
   $i\neq k$.

   Now, let us remind the notion of the composition (see~%
\cite{sh62}). Let
   $u=vdw$, where
   $u,d\in LSA(X)$. By
\cite{sh58}, the minimal non-associative subword
   $[u']$ of the word
   $[u]$ such that
   $u'$ covers
   $d$ is easily seen to be of the form
   $[dc]$, for some
   $c\in X^*$. Obviously, if
   $c\neq 1$ then
   $c=c_1c_2\dots c_m$, where
   $c_i\in LSA(X)$ for
   $i=1,2,\dots m$ and
   $c_1\leqslant c_2 \leqslant \dots \leqslant c_m$. Denote by
   $[vdw]_{d}$ the non-associative word obtained from
   $u$ by replacing the subword
   $[dc]$ by
   $((\dots(([d],[c_1]),[c_2])\dots ),[c_m])$. Let
   $f$ be a monic Lie polynomial (i.e. a polynomial whose coefficient by
   $\overline{f}$ is equal to
   $1$). Denote by
   $[vfw]_{\overline{f}}$ the Lie polynomial obtained from
   $[v\overline{f}w]_{\overline{f}}$ by replacing
   $\overline{f}$ by
   $f$.

   \begin{ddd}
      Let
      $f$ and
      $g$ be monic Lie polynomials and let
      $w\in X^*$ be such that
      $w=\overline{f}a=b\overline{g}$, where
      $a,b\in{X^*}$ and
      $\ell(\overline{f})+\ell(\overline{g})>\ell(w)$. The \emph{composition
      of intersection} of
      $f$ and
      $g$ relative to
      $w$ is defined by
      $$(f,g)_{w}=[fa]_{\overline{f}}-[bg]_{\overline{g}}.$$
   \end{ddd}

   \begin{ddd}\label{inclus}
      Let
      $f$ and
      $g$ be monic Lie polynomials and let
      $w\in X^*$ be such that
      $w=\overline{f}=a\overline{g}b$, where
      $a,b\in{X^*}$. The \emph{composition of inclusion} of
      $f$ and
      $g$ relative to
      $w$ is defined as follows:
      $$(f,g)_{w,a}=f-[agb]_{\overline{g}}.$$
   \end{ddd}

   For simplicity, we denote the composition of inclusion also by
   $(f,g)_{w}$. Let us note that
   $(f,g)_w\in \mathrm{Id}(f,g)$ and
   $\overline{(f,g)}_w\prec w$ (here
   $\mathrm{Id}(f,g)$ is an ideal of
   $\mathrm{Lie}_R(X)$ generated by
   $f$ and
   $g$). Analogously, if
   $S$ is a set of Lie polynomials in
   $X$, then
   $\mathrm{Id}(S)$ denotes the ideal of
   $\mathrm{Lie}_{R}(X)$ generated by
   $S$.

   \begin{ddd}
      Given a set
      $S$ of monic Lie polynomials, the composition
      $(f,g)_w$ of
      $f,g\in S$ is \emph{trivial  relative to}
      $S$ if
      $(f,g)_w=\sum_i \alpha_i[a_is_ib_i]$, where
      $\alpha_i\in R$,
      $a_i,b_i \in X^*$,
      $s_i\in S$ и
      $a_i\overline{s}_ib_i\prec w$.
   \end{ddd}

   \begin{ddd}
     Let
     $S$ be the set of monic Lie polynomials.
     $S$ is a \emph{Gr\"{o}bner--Shirshov basis} of
     $\mathrm{Lie}_{R}(X)/\mathrm{Id}(S)$ if every composition of any two elements
     is trivial relative to
     $S$.
   \end{ddd}

   \begin{ddd}
     Let
     $[u],[v]\in LS(X)$. We say that
     $[v]$ is a \emph{subword} of
     $[u]$ if an associative word
     $v$ is a subword of an associative word
     $u$, i.e. if there exist
     $a,b\in X^*$ such that
     $avb=u$.
   \end{ddd}

   \begin{ddd}
     The word
     $[u]\in LS(X)$ is called
     $S$-\emph{reduced} if
     $u\neq a\overline{s}b$ for any
     $s\in S$ and
     $a,b\in X^*$.
   \end{ddd}
   The main result we need is the following one:
   \begin{llll}[CD-Lemma]
     {\rm (}see
\cite{Bo72,BFKK00}{\rm )}
     $S$ is a Gr\"{o}bner--Shirshov basis if and only if the set of all Lyndon--Shirshov
     $S$-reduced words is a linear basis for
     $\mathrm{Lie}_{R}(X)/\mathrm{Id}(S)$.
   \end{llll}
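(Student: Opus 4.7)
I would prove the two implications separately. For the direction ``$S$-reduced Lyndon--Shirshov words form a basis $\Rightarrow$ $S$ is a Gr\"obner--Shirshov basis'' (the easier one), given any composition $h=(f,g)_w$ I have $h\in\mathrm{Id}(S)$ and $\overline{h}\prec w$. Applying iteratively the rewriting $[u]\mapsto [u]-[asb]_{\overline{s}}$ whenever $\overline{s}$ occurs as a subword of $u$ for some $s\in S$---each step strictly decreases the deg-lex leading monomial---expresses $h$ as $\sum_i\alpha_i[a_is_ib_i]_{\overline{s}_i}$ with $a_i\overline{s}_ib_i\prec w$, plus a linear combination of $S$-reduced Lyndon--Shirshov words. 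Since $h\equiv 0$ in $\mathrm{Lie}_R(X)/\mathrm{Id}(S)$ and the residual reduced words are linearly independent there, the residue vanishes, which is precisely the triviality condition.

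\medskip

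For the substantive direction ``$S$ is Gr\"obner--Shirshov $\Rightarrow$ reduced words form a basis'', assume every composition is trivial. Spanning is direct: if $[u]\in LS(X)$ is not $S$-reduced, write $u=a\overline{s}b$ and replace $[u]$ with $[u]-[asb]_{\overline{s}}$; modulo $\mathrm{Id}(S)$ this is the same class but with strictly smaller leading monomial, so well-foundedness of $\prec$ and iteration finish. Linear independence reduces to the key claim: \emph{every nonzero $f\in\mathrm{Id}(S)$ satisfies $\overline{f}=a\overline{s}b$ for some $s\in S$ and $a,b\in X^*$.} I would prove this via the stronger assertion that every $f\in\mathrm{Id}(S)$ admits a representation $f=\sum_j\alpha_j[a_js_jb_j]_{\overline{s}_j}$ with $a_j\overline{s}_jb_j\preceq\overline{f}$, by a double induction on the deg-lex bound $w=\max_j a_j\overline{s}_jb_j$ over all such representations, and within a fixed $w$ on the number of summands attaining $w$. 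If only one summand attains $w$ it must equal $\overline{f}$ up to scalar, giving the claim. If two do, the occurrences of $\overline{s}_1$ and $\overline{s}_2$ inside $w$ are disjoint, overlapping, or nested: the disjoint case commutes directly, while the overlap and nesting cases identify $[a_1s_1b_1]_{\overline{s}_1}-[a_2s_2b_2]_{\overline{s}_2}$, up to a flanking and a scalar, with a composition of intersection or of inclusion of $s_1$ and $s_2$. Triviality of that composition rewrites it with a strictly smaller bound, reducing the induction parameter.

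\medskip

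The main obstacle is the overlap step of this double induction: carefully identifying the difference of two parallel reductions with a composition of $s_1$ and $s_2$ modulo lower-order flanking terms. This rests on two combinatorial facts from Shirshov's original work, which I would need to invoke: first, that $\overline{[asb]_{\overline{s}}}=a\overline{s}b$, so that the rewriting really has the expected leading term; second, that the canonical bracketing $[\,\cdot\,]\colon LSA(X)\to LS(X)$ behaves predictably under the factorization $[dc]=((\cdots([d],[c_1])\cdots),[c_m])$ used to define $[vdw]_d$. Once these are in hand the double induction closes, and the key claim translates directly into linear independence of the $S$-reduced Lyndon--Shirshov words modulo $\mathrm{Id}(S)$.
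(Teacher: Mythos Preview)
The paper does not prove this statement at all: it is quoted as the \emph{CD-Lemma} with an explicit attribution ``(see \cite{Bo72,BFKK00})'' and no proof environment follows it. The lemma is used as a black box in the subsequent arguments. Your outline is the standard proof of the Composition--Diamond lemma as it appears in those references (reduction for spanning, and the double induction on $\max_j a_j\overline{s}_jb_j$ together with the disjoint/overlap/inclusion trichotomy for linear independence), so there is nothing to compare beyond noting that you have reconstructed the cited argument rather than anything the present paper supplies.
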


   Finally, let us remind a couple properties of the Lyndon---Shirshov words that we are going
   to use in this paper.

   \begin{llll} \emph{(\cite{BC08}, Lemma~2.12)} \label{LSAL}
      If
     $u,v\in LSA(X)$ and
     $u>v$, then
     $uv\in LSA(X)$.
   \end{llll}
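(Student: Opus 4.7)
My plan is to verify that $uv \in LSA(X)$ by using the equivalent suffix characterization: an associative word $w$ lies in $LSA(X)$ if and only if $w$ is strictly greater than every proper nonempty suffix of $w$ in the paper's lexicographic order. This equivalence is a standard fact about Lyndon words which I take as given. With it in hand, I need only check $uv > s$ for every proper nonempty suffix $s$ of $uv$. A key auxiliary observation that I will use throughout is that, in the paper's convention, any strict inequality $a > b$ with $\ell(a) > \ell(b)$ is necessarily witnessed at some position $j \leq \ell(b)$ at which $a$ carries the strictly larger letter; otherwise $b$ would be a proper prefix of $a$, and the convention $u < 1$ for nonempty $u$ would instead force $b > a$.

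The proper nonempty suffixes of $uv$ split into three shapes: (i) $s = u' v$ with $u'$ a proper nonempty suffix of $u$; (ii) $s = v$; and (iii) $s$ a proper nonempty suffix of $v$. Type (i) is immediate from the Lyndon property of $u$: the suffix characterization gives $u > u'$, so by the auxiliary observation this is witnessed at some position $j \leq \ell(u')$ where $u$ has the larger letter; the same position $j$ then witnesses $uv > u'v$, since $uv$ and $u'v$ agree with $u$ and $u'$ respectively on their first $\ell(u')$ positions. Type (iii) reduces to type (ii) by transitivity: if $uv > v$ and the Lyndon property of $v$ gives $v > s$, then $uv > s$.

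The substantive case is (ii), showing $uv > v$, and I would split it according to the two ways in which $u > v$ can hold in the paper's order. If $u$ and $v$ first differ at some position $j \leq \min(\ell(u), \ell(v))$ with $u$ carrying the larger letter, the same $j$ shows $uv > v$ directly. The delicate subcase is when $u$ is a proper prefix of $v$, say $v = uw$ with $w$ nonempty; here $uv = uuw$ and $v = uw$ agree on their first $\ell(u)$ letters, so the comparison of the tails reduces to comparing $v$ with its proper suffix $w$. The Lyndon property of $v$ gives $v > w$, and by the auxiliary observation this is witnessed at some position $\leq \ell(w)$, which lifts cleanly to a witness for $uv > v$ at the corresponding position shifted by $\ell(u)$. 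The main obstacle is exactly this lifting step in the delicate subcase, and it rests entirely on the auxiliary observation being available, which rules out the possibility that the first difference between $v$ and $w$ occurs beyond position $\ell(w)$.
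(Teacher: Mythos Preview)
Your argument is correct. The paper's own proof takes a somewhat different route: it verifies the original Definition~\ref{alsw} directly rather than the suffix characterization. Concretely, the paper first establishes from the cyclic-rotation property alone that any $w\in LSA(X)$ dominates each of its proper suffixes, and then checks that every nontrivial cyclic rotation $t$ of $uv$ satisfies $uv>t$, splitting into the cases $t=vu$, $t=u_2vu_1$ with $u=u_1u_2$, and $t=v_2uv_1$ with $v=v_1v_2$. You instead verify the equivalent suffix characterization, which you import as a known fact, and split by the shape of the suffix of $uv$. Your route is shorter and arguably cleaner, at the cost of relying on an outside equivalence; the paper's route is fully self-contained from Definition~\ref{alsw}. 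The substantive content overlaps heavily: both arguments hinge on the suffix-domination property of Lyndon words and on the observation that, in the paper's ordering, a strict inequality $a>b$ with $\ell(a)>\ell(b)$ must be decided within the first $\ell(b)$ positions, which is exactly what allows inequalities to lift under right-concatenation.
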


   \begin{llll}\label{less}
     Let
     $[u],[v]\in LS(X)$ and
     $[u]>[v]$. In
     $\mathrm{Lie}_{R}(X)$, we can write
     $([u],[v])=[uv]+\sum_{i}[w_i]$, where
     $[w_i]\in LS(X)$ and
     $w_i<uv$.
   \end{llll}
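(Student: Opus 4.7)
The plan is to reduce the statement to a comparison of leading associative monomials under the deg-lex order. First I would observe that $uv \in LSA(X)$ by Lemma~\ref{LSAL} (applied to $u>v$, both in $LSA(X)$), so that $[uv]\in LS(X)$ is a well-defined object on the right-hand side of the desired identity.

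The key ingredient I would invoke is the classical fact (due to Shirshov, in the same stream of results as \cite{sh58,sh62'}) that every non-associative Lyndon--Shirshov word $[w]$, once expanded inside the free associative algebra $R\langle X\rangle$, has leading deg-lex associative monomial equal to $w$ with coefficient $1$; that is, $[w]=w+\sum_{u\prec w}\beta_u u$. If desired, I would sketch this in a couple of lines by induction on length using the recursive definition $[w]=([w_1],[w_2])$ and the hypothesis $w_1>w_2$.

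Using this, the main computation is short: viewed associatively, $([u],[v])=[u]\cdot[v]-[v]\cdot[u]$, where the two associative products have leading monomials $uv$ and $vu$ respectively; since $u>v$ lexicographically and $\ell(uv)=\ell(vu)$, we have $uv\succ vu$, so the leading deg-lex associative monomial of $([u],[v])$ is $uv$ with coefficient~$1$. Now I would expand $([u],[v])=\sum_i\alpha_i[w_i]$ in the Lyndon--Shirshov basis of $\mathrm{Lie}_R(X)$; since the bracket is homogeneous of total degree $\ell(u)+\ell(v)=\ell(uv)$, all the $w_i$ have length $\ell(uv)$. The leading monomial theorem then forces the maximal $w_i$ (under deg-lex, equivalently under $<$ at this fixed length) to coincide with $uv$ and to appear with coefficient $1$; every other $w_i$ satisfies $w_i\prec uv$, which at equal length means $w_i<uv$.

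The only real obstacle is the leading-monomial theorem for $[w]$ in $R\langle X\rangle$, which the paper has not yet recorded explicitly. I would either cite it directly or dispose of it inline by a brief induction on $\ell(w)$ using the Lyndon--Shirshov bracketing rule and the fact that for $w_1>w_2$ in $LSA(X)$ the associative product $w_1w_2\succ w_2w_1$, exactly as in the main computation above.
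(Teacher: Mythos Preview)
Your argument is correct. The paper's in-text proof of this lemma is only a one-line citation (``It follows clearly from \cite{sh58} (Lemmas~2 and 3) and from Lemma~\ref{LSAL}''), and what you have written is precisely an unpacking of that citation: Shirshov's Lemmas~2 and~3 are exactly the statement that the leading associative monomial of $[w]$ under deg-lex is $w$ with coefficient~$1$, and you use this together with Lemma~\ref{LSAL} in the intended way. So in spirit your proof and the paper's stated proof coincide.

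It is worth noting, however, that the paper's source also contains (after \verb|\end{document}|) a longer self-contained proof that takes a genuinely different route. That argument never passes to the associative envelope: it proceeds by induction on $\ell(uv)$, writes $[u]=([u_1],[u_2])$, and when $v<u_2$ applies the Jacobi identity
\[
(([u_1],[u_2]),[v])=(([u_1],[v]),[u_2])+([u_1],([u_2],[v])),
\]
then uses Lemma~\ref{LSAL} and the induction hypothesis to bound each resulting Lyndon--Shirshov word below $uv$, iterating until the second factor is long enough that $([u'],[v'])=[u'v']$ on the nose. The trade-off is clear: your associative-leading-monomial approach is shorter and conceptually cleaner but imports a nontrivial external fact, while the Jacobi-identity argument is entirely internal to $\mathrm{Lie}_R(X)$ and elementary, at the cost of a more involved case analysis.
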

   \begin{proof}
    It follows clearly from
 \cite{sh58} (Lemmas~2 and 3) and from Lemma~%
 \ref{LSAL}.
    \end{proof}

   \section{A basis of
   $\mathcal{L}(G)$}

   To find a linear basis of
   $\mathcal{L}(G)$, let us find its Gr\"{o}bner--Shirshov basis.

   Let
   $G=\langle X,E \rangle$. Consider the free Lie
   $R$-algebra
   $\mathrm{Lie}_{R}(X)$ whose set of generators coincides with the set of the vertices of
   $G$. Let us denote by
   $S(G)$ the set of Lyndon--Shirshov words
   $[u]$ such that
   $[u]=([\tilde{u}],b)$, where
   $b\in X$,
   $[\tilde{u}]$ does not contain
   $b$, and
   $\{b,y\}\in E$ for any letter
   $y$ appearing in
   $[\tilde{u}]$.

   Note that if
   $[u]\in S(G)$ then its last letter is the second largest one in this word. Indeed, condition (ii)
   of Definition~%
\ref{nalsw} implies, the first letter of
   $[\tilde{u}]$ is larger then
   $b$ (the last letter of
   $[u]$). We are left to show that all other letters of
   $[\tilde{u}]$ are less than
   $b$. If
   $\ell(\tilde{u})=1$ then the statement is obvious. Otherwise,
   $[\tilde{u}]=([u_1],[u_2])$, where
   $[u_1],[u_2]\in LS(X)$.  If there is another letter of
   $[\tilde{u}]$ which is not less than
   $b$ then by condition (ii) of Definition~%
\ref{nalsw} we have the first letter of
   $[u_2]$ is greater than
   $b$ (by the definition of
   $S(G)$ it cannot be equal to
   $b$). But in this case,
   $[u]\not \in LS(X)$ by condition (iii) of Definition
\ref{nalsw}.

   The following statement holds:

   \begin{llll} \label{id}
      $S(G)\subseteq \mathrm{I}(G)$, where
      $\mathrm{I}(G)$ is the ideal of
      $\mathrm{Lie}_{R}(X)$ generated by the set of relations (%
\ref{rel}).
   \end{llll}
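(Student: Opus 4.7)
The plan is to prove, by induction on the length of $\tilde u$, the slightly strengthened statement: for every non-associative Lie monomial $[w]\in\mathrm{Lie}_R(X)$ all of whose letters $y$ satisfy $\{b,y\}\in E$, we have $([w],b)\in\mathrm{I}(G)$. This strengthening is needed because, when we split $[\tilde u]=([u_1],[u_2])$, the factors $[u_1],[u_2]$ need not themselves belong to $S(G)$, and the intermediate brackets produced by the Jacobi identity below need not even satisfy Definition~\ref{nalsw}.

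The base case is $[w]=y$ for a single letter $y$ with $\{b,y\}\in E$: then $([w],b)=(y,b)$ is a defining relation of type (\ref{rel}) and so lies in $\mathrm{I}(G)$ by definition.

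For the inductive step, write $[w]=([w_1],[w_2])$. Combining the Jacobi identity with anticommutativity yields
\[
(([w_1],[w_2]),b)=(([w_1],b),[w_2])+([w_1],([w_2],b)).
\]
Since $[w_1]$ and $[w_2]$ involve strictly fewer letters than $[w]$, and those letters all still commute with $b$ in $G$, the inductive hypothesis gives $([w_1],b),([w_2],b)\in\mathrm{I}(G)$; because $\mathrm{I}(G)$ is an ideal, it follows that $([w],b)\in\mathrm{I}(G)$ as well. Applying this statement with $[w]=[\tilde u]$ yields $[u]=([\tilde u],b)\in\mathrm{I}(G)$, which is the conclusion of the lemma.

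There is no real obstacle in this argument: it is a direct induction powered by Jacobi. The only small subtlety is to formulate the induction over arbitrary non-associative monomials, rather than only over elements of $S(G)$, so that the recursive calls to $([w_1],b)$ and $([w_2],b)$ after applying Jacobi still lie within the scope of the inductive hypothesis.
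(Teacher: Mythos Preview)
Your proof is correct and is essentially the same as the paper's: the paper simply invokes that $(\,\cdot\,,b)$ is a derivation (by Jacobi) to write $([\tilde u],b)$ as a sum of $\ell(\tilde u)$ terms each containing a factor $(x,b)\in\mathrm{I}(G)$, while you unfold exactly this derivation property by an explicit induction on the monomial $[w]$. The only difference is presentation---your version is more detailed and carefully delineates the strengthened induction hypothesis, but the underlying argument is identical.
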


   \begin{proof}
      Since the Lie product is a derivation (it is the consequence of the Jacobi identity),
      $([\tilde{u}],b)$ is equal to a sum of
      $\ell(\tilde{u})$ terms each of which contains a product of the form
      $(x,b)$, where
      $x\in X$ is a letter appearing in
      $u$. This product is in
      $\mathrm{I}(G)$ by
(\ref{rel}).

   \end{proof}

   \begin{ddd}
     Given an associative word
     $d\in LSA(X)$, the word
     $[u]\in LS(X)$ is called
     $d$-\emph{decomposable} if
     $[u]=([v],[w])$, where
     $[w]>[d]$ and
     $d$-\emph{indecomposable}, otherwise.
   \end{ddd}

   Let us take
   $d\in LSA(X)$ and consider a word
   $[u]\in LS(X)$ such that
   $[u]>[d]$. If
   $[u]=([v],[w])$  is
   $d$-decomposable then consider the words
   $[v]$ and
   $[w]$. Each of which is also either
   $d$-decomposable or a product of two words each of them is greater than
   $[d]$ and so on. Since each word has a finite number of letters, any word can be represented as a product of
   $d$-indecomposable words
   $[u_1],[u_2],\dots,[u_k]$ (with some bracketing among them).

   \begin{ddd}
     Let
     $[u],[d]\in LS(X)$. The described above decomposition of
     $[u]$ as a product of
     $d$-indecomposable words
     $[u_1],[u_2],\dots,[u_k]$ is called
     $d$-\emph{decomposition} and denoted by
     $([u_1],[u_2],\dots ,[u_k])_d$ or
     $[u]_d$.
   \end{ddd}

   \begin{ddd}
    Let us define the notion of a \emph{pattern} inductively:
     \begin{enumerate}
       \item \label{p1}
         The symbol
         $*$ is a pattern;
       \item \label{p2}
         If
         $p$ and
         $q$ are patterns then
         $(p,q)$ is also a pattern;
       \item
         There are no patterns except described in
(\ref{p1})-(\ref{p2}).
     \end{enumerate}
   \end{ddd}

   \begin{ddd}
     Let
     $[u],[d]\in LS(X)$. Let us define
     a \emph{
     $d$-pattern} of
     $[u]$ as an object obtained by replacing all
     $d$-indecomposable subwords of
     $[u]$ by
     ``$*$''. Denote the
     $d$-pattern of
     $[u]$ by
     $p(u,d)$.
   \end{ddd}

   \begin{eee}
     Let
     $X=\{x_1,x_2,\dots , x_6\}$. Consider the order on it defined as follows:
     $(x_i<x_j) \Leftrightarrow (i<j)$. If
     $[u]= \Bigl(\bigl((x_6,x_3),(x_5,x_1)\bigr),\bigl(((x_6,x_1),x_3),(x_5,x_2)\bigr)\Bigr)$
     and
     $[d]=(x_4,x_2)$ then the set of
     $(x_4,x_2)$-indecomposable subwords of
     $[u]$ consists of
     $(x_6,x_3)$,
     $(x_5,x_1)$,
     $((x_6 x_1) x_3)$, and
     $(x_5 x_2)$. Therefore,
     $(x_4,x_2)$-pattern of
     $[u]$ is
     $((*\,,*)(*\,,*))$.
   \end{eee}

   \begin{rrr}
     For any
     $[u]\in LS(X)$,
     $d$-pattern shows the bracketing among its
     $d$-indecomposable multipliers.
   \end{rrr}

   Note that any
   $d$-pattern is obviously a pattern.

   Let
   $f_1,\dots ,f_k$ be Lie polynomials and let
   $p$ be a pattern containing
   $k$ symbols
   ``$*$''. We denote by
   $(f_1,\dots, f_k)_p$ the Lie polynomial obtaining by simultaneous replacing
   $i$th symbol
   ``$*$'' by
   $f_i$ (counting from left to right) for all
   $i=1,2,\dots , k$ and by transformations by the distributive law after that
   (we do not use the anticommutative law and the Jacobi identity).

   \begin{llll}\label{drazl}
     Let
     $[u]$ and
     $[d]$ be Lyndon--Shirshov words such that
     $[u]>[d]$ and all letters of
     $u$ which are greater than the the greatest letter of
     $d$ appear in
     $u$ exactly once and the greatest letter of
     $d$ is not contained in
     $u$. Let
     $[u]=([u_1],[u_2],\dots , [u_k])_d$ be a
     $d$-composition of
     $u$ and let
     $u'$ be a word obtaining from
     $u$ by replacing
     $u_r$ by
     $u_rd$ for some
     $r$. Then
     $([u_r],[d])\in{LS}(X)$,
     $[u']=
        ([u_1],[u_2],\dots ,([u_r],[d]), \dots ,[u_k])_{p(u,d)}$,
     i.e.
     $p(u,d)=p(u',d)$.
   \end{llll}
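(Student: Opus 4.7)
The plan is to prove the three assertions by induction on the pattern $p(u,d)$, after first establishing $([u_r],[d])\in LS(X)$ by a direct check.

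I would begin by noting that each piece $[u_i]$ of the $d$-decomposition satisfies $[u_i]>[d]$: at every splitting step $[u^\circ]=([v^\circ],[w^\circ])$ performed during the decomposition one has $[w^\circ]>[d]$ by definition and $[v^\circ]>[w^\circ]>[d]$ by the Lyndon--Shirshov condition, so the property propagates to the leaves. Applied to $u_r$, this gives $u_r>d$ in lex order, and Lemma~\ref{LSAL} then yields $u_rd\in LSA(X)$. Conditions~(i) and (ii) of Definition~\ref{nalsw} for $([u_r],[d])$ are immediate; condition~(iii) is where $d$-indecomposability enters, since if $[u_r]=([u_{r1}],[u_{r2}])$ is the canonical splitting then indecomposability forces $[u_{r2}]\leqslant[d]$, i.e.\ $d\geqslant u_{r2}$.

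The main induction runs on $p(u,d)$. The base case $p(u,d)=*$ is immediate: $k=1$, $u_r=u$, $u'=ud$, and $([u],[d])$ is visibly $d$-indecomposable (its canonical right factor $[d]$ is not strictly greater than $[d]$), so $p(u',d)=*=p(u,d)$. For the inductive step I write $[u]=([v],[w])$ with $[w]>[d]$ --- this is simultaneously the canonical Lyndon--Shirshov splitting and the top $d$-splitting --- apply the induction hypothesis to whichever of $v$ or $w$ contains the piece $u_r$, and then check that recombining the two parts yields a Lyndon--Shirshov word with the claimed bracketing.

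Verifying conditions~(i)--(iii) of Definition~\ref{nalsw} for the recombined bracketing is where the hypothesis on the letters of $u$ is used. Two observations drive the argument. First, since the first letter of any Lyndon--Shirshov word is its greatest letter, the hypothesis forces ``parallel'' subwords of $u$ (for instance $v_2$ and $w$) to have distinct first letters whenever those first letters exceed the greatest letter of $d$. Second, the piece $u_{r+1}$ immediately to the right of $u_r$ in the $d$-decomposition (if it exists) has first letter strictly greater than the greatest letter of $d$, because $u_{r+1}>d$ and the greatest letter of $d$ is absent from $u$. Therefore inserting $d$ just after $u_r$ either extends the enclosing word at its tail --- so the original is a proper prefix of the result, which is hence lex-smaller --- or alters it at the first position where the first letter of $d$ confronts a strictly larger letter, again making the result lex-smaller. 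This gives $v_2'\leqslant v_2$ when the modification sits inside $v_2$ and $w'\leqslant w$ when it sits inside $w$, which, together with the unchanged first letters, lets conditions~(ii) and (iii) pass from $[u]$ to $[u']$; condition~(i) then follows from Lemma~\ref{LSAL}. Finally, because $([u_r],[d])$ is $d$-indecomposable, the $d$-decomposition of $[u']$ uses the same bracketing pattern as that of $[u]$.

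The main obstacle I anticipate is exactly this condition~(iii) bookkeeping: ensuring that the inserted $d$ does not push any intermediate subword past the one with which condition~(iii) compares it. The hypothesis on the letters of $u$ is precisely the input that feeds the two lex-order observations above and makes the argument go through.
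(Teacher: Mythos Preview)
Your approach is essentially the same as the paper's: both argue by induction on the number $k$ of pieces in the $d$-decomposition (equivalently, on the pattern $p(u,d)$), establish $([u_r],[d])\in LS(X)$ by the same direct check using $d$-indecomposability for condition~(iii), and in the inductive step split $[u]=([v],[w])$ and reduce to whichever factor contains $u_r$.

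The paper's inductive step is slightly slicker than yours. Rather than verifying conditions (i)--(iii) of Definition~\ref{nalsw} for the proposed bracketing $([v'],[w'])$, it observes that the greatest and second-greatest letters of $u$ each appear exactly once (both being first letters of some $[u_i]$, hence strictly larger than the greatest letter of $d$), and that inserting $d$ leaves the identity of the second-greatest letter unchanged. Since for a Lyndon--Shirshov word whose two largest letters each occur once the canonical split occurs precisely at the second-greatest letter, this immediately gives $[u']=([v'],[w'])$.

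One small wrinkle in your sketch: the inequality $w'\leqslant w$ that you record when the modification sits inside $w$ is true but does not by itself yield condition~(iii), which requires $w'\geqslant v_2$ (with $v_2$ unchanged in this case). What actually settles~(iii) here is your first-letter observation: the first letters of $w$ and $v_2$ are distinct letters of $u$, each exceeding the greatest letter of $d$ (or else $v_2\leqslant d<w$ already), so the comparison $w\geqslant v_2$ is decided at the first letter and transfers to $w'$, whose first letter is unchanged. You have the right ingredient in hand; it is just the inequality $w'\leqslant w$ that is a red herring for this sub-case.
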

   \begin{proof}
     The first statement is obvious since
     $[u_r]$ and
     $[d]$ are Lyndon--Shirshov words,
     $[u_r]>[d]$ and if
     $[u_r]=([v],[w])$ then
     $[w]\leqslant [d]$ because
     $[u_r]$ is
     $d$-indecomposable.

     Next, obviously
     $([u_r],[d])$ is
     $d$-indecomposable. We are left to show that
     $d$-patterns of
     $[u]$ and
     $[u']$ coincide.

     Let us proceed by induction by the number of multipliers in
     $d$-decompositions (denote this number by
     $k$). For
     $k=1$, the assertion is obvious. Let
     $k\geqslant 2$. In this case,
     $[u]=([v],[w])$, where
     $[v]=([u_1],\dots , [u_l])_d$,
     $[w]=([u_{l+1}],\dots , [u_k])_d$. Since all the letters of
     $[u]$ which are greater than the greatest letter of
     $d$, appear in
     $[u]$ exactly once, the first letter of
     $[u_1]$ is the greatest letter of
     $[u]$ and the first letter of
     $[u_{l+1}]$ is the second greatest letter of
     $[u]$.

     Consider the word
     $[u']$. Since
     $[u_i]>[d]$ for all
     $i$  and since
     $[u]$ does not contain the greatest letter of
     $[d]$ the greatest letter of
     $[d]$ is less than the greatest letters of
     $[u_i]$ (%
     $i=1,2,\dots ,k$). Consequently, the second greatest letter
     of
     $[u']$ is also the first letter of
     $[u_{l+1}]$. So,
     $[u']=([v'],[w'])$, where
     $v'$ и
     $w'$ are words obtained from
     $v$ and
     $w$ by replacing
     $u_r$ by
     $u_rd$ (if
     $u_r$ is not a subword of
     $v$ (respectively not a subword of
     $w$), then we suppose
     $v'=v$ (%
     $w'=w$ respectively)). By induction hypothesis,
     $p(v,d)=p(v',d)$ and
     $p(w,d)=p(w',d)$. Therefore,
     $p(u,d)=(p(v,d),p(w,d))=(p(v',d),p(w',d))=p(u',d)$.
  \end{proof}

  \begin{llll}
     Let
     $[u]$ and
     $[d]$ be Lyndon--Shirshov words such that all the letters of
     $u$, which are greater than the greatest letter of
     $d$, appear in
     $u$ exactly once and the greatest letter of
     $d$ is not contained in
     $u$. Let
     $[u]=([u_1],[u_2],\dots , [u_k])_y$, where
     $y$ is a letter not contained in
     $u$ and such that
     $y>d$. Finally, let the word
     $u'$ be obtained from
     $u$ by replacing
     $u_r$ by
     $u_ry$. Then the
     $d$-pattern of
     $[u']$ is obtained from
     $d$-pattern of
     $[u]$ by replacing its part
     $p(u_r,d)$, corresponding to
     $[u_r]$ by
     $(p(u_r,d),*)$. In other words, if
     $[u]=([v_1],\dots,[v_t],\dots, [v_s])_d$, where
     $[v_t]$ is the last multiplier in the
     $d$-decomposition of
     $[u_r]$, then
     $[u']=([v_1,\dots,[v_t],y,[v_{t+1}]\dots,[v_s])_d$.
   \end{llll}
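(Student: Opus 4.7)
The plan is to reduce to Lemma~\ref{drazl}, applied with the single letter $y$ playing the role of $d$, and then to exploit the fact that whenever $y>d$ the $d$-decomposition of any $[w]\in LS(X)$ refines its $y$-decomposition; once both ingredients are in place the conclusion follows by substitution.

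First I would verify that the hypotheses of Lemma~\ref{drazl} are met with $d$ replaced by $y$: the greatest (and only) letter of the word $y$ is $y$ itself, which is absent from $u$ by assumption, and since $y>d$ any letter of $u$ exceeding $y$ also exceeds the greatest letter of $d$ and therefore appears in $u$ exactly once. Lemma~\ref{drazl} then yields $([u_r],y)\in LS(X)$ together with
\[
[u']=([u_1],\ldots ,([u_r],y),\ldots ,[u_k])_{p(u,y)},
\]
so that $[u']$ differs from $[u]$, as a non-associative word, only in that the $r$-th $y$-indecomposable factor has become $([u_r],y)$, and the $y$-pattern is preserved: $p(u,y)=p(u',y)$.

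Next I would prove, by a short induction on the height of the non-associative bracketing, the following structural claim: if $y>d$ and $[w]\in LS(X)$ has $y$-decomposition $([w_1],\ldots ,[w_m])_y$, then the $d$-decomposition of $[w]$ is obtained by replacing each $[w_i]$ in the fixed pattern $p(w,y)$ by its own $d$-decomposition; equivalently $p(w,d)$ is obtained from $p(w,y)$ by substituting the $i$-th symbol $*$ by $p(w_i,d)$. The reason is that at every node of the bracketing tree where the right branch exceeds $y$ it also exceeds $d$, so each split occurring in the $y$-decomposition is also a split of the $d$-decomposition, and additional $d$-splits can occur only inside the $y$-indecomposable factors. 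This is the main (though not deep) step of the argument.

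Applying this refinement both to $[u]$ and to $[u']$, the $y$-patterns coincide and the factors at positions $i\neq r$ are unchanged, so the two $d$-decompositions agree off position $r$. At position $r$ the factor has changed from $[u_r]$ to $([u_r],y)$; since $y$ is a single letter it is $d$-indecomposable, and since $y>d$ the word $([u_r],y)$ is $d$-decomposable at its top split, hence its $d$-decomposition is precisely the $d$-decomposition of $[u_r]$ followed by $y$ as a new rightmost $d$-indecomposable factor, giving $d$-pattern $(p(u_r,d),\,*)$. Substituting into the refined pattern for $[u']$ yields exactly the claimed formula, and if we enumerate the $d$-indecomposable factors of $[u]$ as $([v_1],\ldots ,[v_s])_d$ with $[v_t]$ the last one originating from $[u_r]$, we read off $[u']=([v_1],\ldots ,[v_t],y,[v_{t+1}],\ldots ,[v_s])_d$. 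The only non-routine ingredient is the refinement observation; everything else is direct substitution.
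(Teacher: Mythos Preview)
Your proof is correct and follows essentially the same route as the paper: first invoke Lemma~\ref{drazl} with $y$ in place of $d$ to see that $p(u,y)=p(u',y)$, then use the observation that for $y>d$ the $d$-pattern is obtained from the $y$-pattern by substituting each $*$ with the $d$-pattern of the corresponding $y$-indecomposable factor, and finally compute the $d$-pattern of $([u_r],y)$ directly. The only difference is that you spell out the refinement observation as a short induction whereas the paper declares it obvious, and you are slightly more explicit about checking the hypotheses of Lemma~\ref{drazl}.
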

   \begin{proof}
     It is obvious that if
     $[u]\in LS(X)$ and
     $y>d$ then
     $$[u]_d=([u_{1,1}],\dots, [u_{1,l_1}],\dots,[u_{k,1}],\dots,[u_{k,l_k}])_d,$$
     where
     $([u_{i1}],\dots, [u_{il_1}])_d=[u_i]_d$ for
     $i=1,2,\dots,k$. In other words, it means that the
     $d$-pattern of
     $[u]$ is obtained from the
     $y$-pattern of this word by simultaneous  replacing
     the $i$th symbol
     ``$*$'' by
     $d$-pattern of the corresponding word
     $[u_i]$.

     By Lemma~%
\ref{drazl},
     $y$-patterns of
     $[u]$ and
     $[u']$ coincide. Consequently, we only need to compare the
     $d$-patterns of
     $[u_r]$ and
     $[u_ry]$. We have
     $[u_ry]=([u_r],y)$ since
     $y>d$ and
     $u_r$ is
     $y$-indecomposable. So, the assertion follows.
   \end{proof}

   Now, we can start computing compositions. Let
   $[u],[v]\in S(G)$. Suppose that
   $[u]>[v]$. It follows from the structure of the words of
   $S(G)$ that a composition of intersection for these two words in
   $S(G)$ exists if and only if the first letter of
   $[v]$ is equal to the last letter of
   $[u]$. Moreover, in this case, there exists the unique composition of
   $[u]$ and
   $[v]$. This is the composition relative to
   $w=\tilde{u}v$, where
   $\tilde{u}$ is the associative word obtained by deleting the last letter in
   $u$.

   Given
   $[u]\in LS(X)$,
   $d\in LSA(X)$, we introduce the map
   $\partial_d$:
   $$\partial_d[u]=\sum_{i=1}^k ([u_1],\dots, ([u_i],d),\dots [u_k])_d,$$
   where
   $[u]=([u_1],\dots,[u_k])_d$. In other words, if the first
   letters of
   $[u_1]\dots [u_k]$ are distinct then applying this map to
   $[u]$ gives us the representation of
   $([u],[d])$ as a sum of Lyndon--Shirshov words. It is clear that
   if
   $([v],[w])$ is a Lyndon--Shirshov word then
   $$\partial_d ([v],[w])=\bigl((\partial_d [v]),[w]\bigr)+\bigl([v],(\partial_d [w])\bigr),$$
   i.e.
   $\partial_d$ is a derivation. By induction, for Lyndon--Shirshov words
   $[u]=([u_1],\dots,[u_k])_d$ and
   $[v]<[d]$ we obtain:
   $$\partial_v [u]=\sum_{i=1}^k([u_1], \dots, \partial_v [u_i], \dots, [u_k])_{p(u,d)}.$$

   First of all, let us consider the case
   $v=by_1y_2\dots y_k c$, where
   $b,c,y_1,\dots, y_k\in X$ such that
   $b>c>y_k\geqslant \dots\geqslant y_1$. Let
   $[u]=([\tilde{u}],b)$ be such that each letter of
   $[\tilde{u}]$ that is greater than
   $y_1$ appears in
   $[u]$ exactly once, and
   $[\tilde{u}]$ does not contain letters of
   $[v]$. Let
   $\widehat{S}(G,v)$ be the set of such words
   $[u]$.

   \begin{llll}\label{main}
     Let
     $v=by_1y_2\dots y_k c$, where
     $b,c,y_1,\dots, y_k\in X$ such that
     $b>c>y_k\geqslant \dots \geqslant y_1$ and
     $[u]\in \widehat{S}(G,v)$. Moreover, let
     $[u]=([\tilde{u}],b)$. Then the composition of
     $[u]$ and
     $[v]$ is trivial relative to
     $S(G)$.
   \end{llll}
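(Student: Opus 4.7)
The plan is to unpack the composition $(f,g)_w$, compute $[fa]_{\overline{f}}$ and $[bg]_{\overline{g}}$ explicitly, and then reduce their difference modulo $S(G)$ by an iterated Jacobi calculation. The overlap of $\overline{f}=u=\tilde ub$ and $\overline{g}=v=by_1\cdots y_kc$ is the single letter $b$, so the composition is relative to $w=\tilde uv$. The tail $a=y_1\cdots y_kc$ has the Lyndon--Shirshov factorisation $y_1\leqslant y_2\leqslant\cdots\leqslant y_k\leqslant c$ into non-decreasing single letters, so by the definition of $[\,\cdot\,]_{\overline{f}}$ I obtain
$$[fa]_{\overline{f}}=((\ldots((f,y_1),y_2)\ldots,y_k),c),$$
the left-normed bracket; applying the same observation to $v$ itself gives $[v]=((\ldots((b,y_1),y_2)\ldots,y_k),c)$.

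To compute $[bg]_{\overline{g}}$, I would show that the canonical Lyndon--Shirshov bracketing of $w=\tilde uv$ is $[w]=([\tilde u],[v])$. The word $w$ is Lyndon--Shirshov because its largest letter — the first letter of $\tilde u$ — appears in $w$ exactly once by the $\widehat{S}(G,v)$-hypothesis. Condition~(ii) of Definition~\ref{nalsw} for the root split $([\tilde u],[v])$ is immediate since the first letter of $\tilde u$ strictly exceeds $b$; condition~(iii) follows from the same condition applied to $[u]=([\tilde u],b)$ together with the observation, proved just after the definition of $S(G)$, that every letter of $\tilde u$ beyond the first is strictly smaller than $b$. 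The three conditions being satisfied, uniqueness of the Lyndon--Shirshov bracketing gives $[w]=([\tilde u],[v])$. Since $[v]$ is then an immediate right child of the root of $[w]$, the minimal subword of $[w]$ covering $v$ is $[v]$ itself, and so $[bg]_{\overline{g}}=([\tilde u],g)$.

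The composition therefore reduces to
$$(f,g)_w=((\ldots(([\tilde u],b),y_1),\ldots,y_k),c)-([\tilde u],g),$$
with $g=((\ldots(b,y_1)\ldots,y_k),c)$. I would expand both summands by iteratively applying the derivation form of the Jacobi identity, $((X,Y),z)=((X,z),Y)+(X,(Y,z))$, pushing each of $y_1,\ldots,y_k,c$ past $b$ on the left-hand side and past $[\tilde u]$ on the right-hand side. The matching pairs of summands from the two expansions cancel, leaving only ``boundary'' terms each containing exactly one elementary bracket of the form $(x,b)$ with $x$ a letter of $\tilde u$ (a relation of $S(G)$ since $[u]\in S(G)$ forces $\{x,b\}\in E$), or of the form $(y_i,c)$ or $(b,c)$ (relations of $S(G)$ since $[v]\in S(G)$ forces $\{c,y_i\},\{c,b\}\in E$). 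Each surviving summand is thus of the form $\alpha_i[a_is_ib_i]$ with $s_i\in S(G)$, and a straightforward length-lex comparison yields $a_i\overline{s_i}b_i\prec w$.

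The main obstacle is the combinatorial bookkeeping in the iterated Jacobi expansion: each derivation step doubles the number of summands, and pairing them correctly across the two sides is the real work. I would therefore induct on $k$, using the $\partial_d$-formalism developed just before the lemma — which is, by construction, a derivation on $d$-decompositions — together with Lemma~\ref{drazl} to ensure that the $d$-patterns of the intermediate Lyndon--Shirshov words are preserved through the manipulations. The base case $k=0$ reduces by a single Jacobi identity to $(([\tilde u],c),b)$, which is handled by a small explicit computation using the $S(G)$-relations $\{b,c\}\in E$ and $\{b,x\}\in E$ for each letter $x$ of $\tilde u$.
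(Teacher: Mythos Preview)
Your setup is correct: the overlap is the single letter $b$, and you correctly identify
$[fa]_{\overline f}=\partial_{\Delta'}[u]$ with $\Delta'=\{y_1,\dots,y_k,c\}$ and
$[\tilde u g]_{\overline g}=([\tilde u],[v])=([\tilde u],\partial_{\Delta'}b)$, so the
composition equals $\partial_{\Delta'}([\tilde u],b)-([\tilde u],\partial_{\Delta'}b)$, exactly
as in the paper's equation~(\ref{sum}). From here, however, your description of the
cancellation is not accurate. Applying Leibniz once to the first summand gives
\[
  \partial_{\Delta'}([\tilde u],b)=\sum_{\Delta_1\sqcup\Delta_2=\Delta'}
  (\partial_{\Delta_1}[\tilde u],\partial_{\Delta_2}b),
\]
and the \emph{single} term with $\Delta_1=\varnothing$ cancels the whole of
$([\tilde u],\partial_{\Delta'}b)$; there is no pairwise matching of ``two expansions''.
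The residue is $\sum_{\Delta_1\neq\varnothing}(\partial_{\Delta_1}[\tilde u],\partial_{\Delta_2}b)$.
For the terms with $c\in\Delta_2$ one indeed has
$\partial_{\Delta_2}b\in S(G)$ (this is your ``$(b,c)$'' case). But the terms with
$c\in\Delta_1$ are the real difficulty: here $\partial_{\Delta_2}b=(\dots(b,y_{j_1})\dots,y_{j_m})$
need not lie in $S(G)$, since nothing forces $b$ to be adjacent to the $y_j$'s; and
$\partial_{\Delta_1}[\tilde u]$ carries no visible factor of the form $(x,b)$.

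This is exactly the point where the paper does nontrivial work that your sketch does not
supply. The paper passes to the $c$-decomposition $[\tilde u]=([u_1],\dots,[u_r])_c$ and,
for each placement of $c$ into some $[u_i]$, \emph{regroups} all remaining terms as
$\partial_\Delta$ applied to the auxiliary element
$\bigl(([u_1],\dots,([u_i],c),\dots,[u_r])_{p(\tilde u,c)},\,b\bigr)$, which lies in $S(G)$
because $b$ is adjacent both to $c$ and to every letter of $\tilde u$; the discrepancy
between this regrouping and the original sum consists precisely of the $(c,y_j)$-terms
(equations~(\ref{int1})--(\ref{smsum})). Your proposal names the right ingredients
($\partial_d$, Lemma~\ref{drazl}, the relations $(c,y_j)$) but does not contain this
regrouping idea, and induction on $k$ alone will not manufacture it: at each step you will
still be left with a term in which $c$ has landed inside $[\tilde u]$ and the outer factor
$\partial_{\Delta_2}b$ has no $S(G)$-structure. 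You should replace the ``matching pairs
cancel / boundary terms contain $(x,b)$'' paragraph with the explicit $c$-decomposition
argument.
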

   \begin{proof}
     Let us note that the vertex
     $b$ is adjacent in
     $G$ to the vertices corresponding to the letters of
     $\tilde{u}$, and
     $c$ is adjacent to
     $b,y_1,\dots, y_k$. We are going to compute the composition
     $([u],[v])_{[\tilde{u}v]}$.

     For an arbitrary set
     $A=\{v_1,v_2,\dots,v_s\} \subseteq LSA(X)$ such that
     $v_1\leqslant v_2 \leqslant \dots \leqslant v_s$, we use the following notation:
     $$\partial_{A} [u]=\partial_{v_s}\partial_{v_{s-1}}\dots \partial_{v_1}[u],$$

     Let
     $\Omega_m(A)$ be the set of all ordered decompositions of
     $A$ by
     $m$ subsets (some of them may be empty), i.e.
     $$\Omega_m(A)=\left\{(A_1,\dots, A_m)\,\left|\,
       \bigcup_{t=1}^m A_t=A;\ A_i\cap A_j=\varnothing, \text{ if }i\neq j\right.\right\}.$$
     Henceforth, we will denote the set
     $\{y_1,\dots,y_k\}$ by
     $\Delta$ and the set
     $\{y_1,\dots, y_k,c\}$ by
     $\Delta'$. Let
     $[\tilde{u}]=([u_1],\dots,[u_r])_c$. We obtain
     \begin{equation}
     \begin{split}\label{sum}
       ([u],&[v])_{[\tilde{u}v]}  =
            \partial_{\Delta'} [u]-([\tilde{u}],\partial_{\Delta'} b)\\
           & =  \sum_{(\Delta_1,\dots, \Delta_{r+1})\in \Omega_{r+1}(\Delta')}
           \bigl((\partial_{\Delta_1}[u_1],\dots,
           \partial_{\Delta_r}[u_r])_{p(\tilde{u},c)},\partial_{\Delta_{r+1}}b\bigr)
           -([\tilde{u}],\partial_{\Delta'} b)\\
           & = \sum_{(\Delta_1,\dots, \Delta_{r+1})\in \Omega_{r+1}(\Delta)}
           \bigl((\partial_{\Delta_1}[u_1],\dots,
           \partial_{\Delta_r}[u_r])_{p(\tilde{u},c)},\partial_c
           \partial_{\Delta_{r+1}}b\bigr)\\
           & + \sum_{i=1}^{r}\
            \sum_{(\Delta_1,\dots, \Delta_{r+1})\in \Omega_{r+1}(\Delta)}
           \bigl((\partial_{\Delta_{1}}[u_1],\dots,\partial_c\partial_{\Delta_{i}}[u_i],\dots,\partial_{\Delta{r}}[u_r])_{p(\tilde{u},c)}
           \partial_{\Delta_{r+1}}b\bigr)\\
           &-([\tilde{u}],\partial_{\Delta'} b)
     \end{split}
     \end{equation}

     In the first sum, all summands can be rewritten as sums of Lyndon--Shirshov words
     by the distributive law. Moreover, if
     $\Delta_{r+1}\neq \Delta$, then the corresponding summand
     $\bigl((\partial_{\Delta_1}[u_1],\dots,
           \partial_{\Delta_r}[u_r])_{p(\tilde{u},c)},\partial_c
           \partial_{\Delta_{r+1}}b\bigr)$ is a sum of words less than
     $[\tilde{u}v]$. There is a multiplier of the form
     $\partial_c \partial_{\Delta_{r+1}}b=
     \bigl(\bigl((\dots(b,y_{i_1}),\dots),y_{i_l}\bigr),c \bigr)\in S(G)$ in each such word.
     If
     $\Delta_{r+1}=\Delta$ then we obtain
     $\bigl(([u_1],\dots,[u_r])_c,\partial_{c}\partial_{\Delta}b\bigr)=
      ([\tilde{u}],\partial_{\Delta'} b)$ that is negative to the last summand.

     So,
(\ref{sum}) implies
     \begin{equation}\label{simsum}
     \begin{split}
       (&[u], [v])_{[\tilde{u}v]}\\
          &\equiv_{[\tilde{u}v]}  \sum_{i=1}^{r}\
            \sum_{(\Delta_1,\dots, \Delta_{r+1})\in \Omega_{r+1}(\Delta)}
           \bigl((\partial_{\Delta_{1}}[u_1],\dots,\partial_c\partial_{\Delta_{i}}[u_i],\dots,\partial_{\Delta{r}}[u_r])_{p(\tilde{u},c)}
           \partial_{\Delta_{r+1}}b\bigr),
     \end{split}
     \end{equation}
     where the sign
     ``$\equiv_{[w]}$'' means that the LHS and the RHS  are equal modulo
     summands in
     $S(G)$ less than
     $[w]$.

     On the other hand,
     \begin{equation}\label{int1}
     \begin{split}
       0
      & \equiv_{[\tilde{u}v]}\partial_{\Delta}  \bigl(([u_1],\dots, ([u_i],c), \dots
       [u_r])_c, b\bigr) \\
      & = \sum_{(\Delta_1,\dots,\Delta_{r+1})\in \Omega_{r+1}(\Delta)}
       \bigl(\partial_{\Delta_{1}}[u_1],\dots, \partial_{\Delta_i} ([u_i],c),\dots,
       \partial_{\Delta_r} [u_r])_{p(\tilde{u},c)},\partial_{\Delta_{r+1}} b \bigr)
     \end{split}
     \end{equation}

     Note that each summand of the LHS is a sum of Lyndon--Shirshov words less than
     $[\tilde{u}v]$ and
     \begin{equation}\label{int2}
       \partial_{\Delta_i} ([u_i],c)  = \sum_{(\Delta_{i,1},\Delta_{i,2})\in \Omega_2(\Delta_i)}
         (\partial_{\Delta_{i,1}}[u_i],\partial_{\Delta_{i,2}}c).
     \end{equation}
     moreover, if
     $\Delta_{i,2}\neq \varnothing$ then each word in the RHS of
(\ref{int2}) is a sum of words containing multipliers of the form
     $(c,y_j)$, for some
     $j$. But each such word is in
     $S(G)$ since
     $c>y_j$ and
     $c$ is adjacent to
     $y_j$ for any
     $j=1,2,\dots ,k$. Consequently,
(\ref{int1}) is followed by

     \begin{equation} \label{smsum}
     \begin{split}
      0
      & \equiv_{[\tilde{u}v]} \partial_{\Delta} \bigl(([u_1],\dots, ([u_i],c), \dots
       [u_r])_c, b\bigr) \\
      & \equiv_{[\tilde{u}v]} \sum_{(\Delta_1,\dots,\Delta_{r+1})\in \Omega_{r+1}(\Delta)}
       \bigl((\partial_{\Delta_{1}}[u_1],\dots, (\partial_{\Delta_i}[u_i],c),\dots,
       \partial_{\Delta_r} [u_r])_{p(\tilde{u},c)},\partial_{\Delta_{r+1}} b \bigr)\\
      & = \sum_{(\Delta_1,\dots,\Delta_{r+1})\in \Omega_{r+1}(\Delta)}
       \bigl((\partial_{\Delta_{1}}[u_1],\dots, \partial_c \partial_{\Delta_i}[u_i],\dots,
       \partial_{\Delta_r} [u_r])_{p(\tilde{u},c)},\partial_{\Delta_{r+1}} b \bigr),
     \end{split}
     \end{equation}

     Substituting
(\ref{smsum}) to
 (\ref{simsum}) for
      $i=1,2,\dots, r$ completes the proof of the lemma.
   \end{proof}

   Let us consider more general case.
   \begin{llll}\label{gen}
     Let
     $v=by_1y_2\dots y_k c$, where
     $b>c>y_k\geqslant \dots \geqslant y_1$, and
     $[u]\in S(G)$ is such that
     $[u]=([\tilde{u}],b)$. Then the composition of
     $[u]$ and
     $[v]$ is trivial relative to
     $S(G)$.
   \end{llll}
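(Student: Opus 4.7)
\medskip
\noindent\textbf{Proof plan.} The plan is to reduce Lemma~\ref{gen} to Lemma~\ref{main} by a substitution argument. The word $[u]$ fails to lie in $\widehat{S}(G,v)$ for two possible reasons: some position of $\tilde u$ may carry a letter of $\{c,y_1,\dots,y_k\}$, or a letter $>y_1$ may occur more than once in $\tilde u$. I would call such positions \emph{bad} and, for each bad position $i$, introduce a fresh letter $z_i$, setting $X'=X\sqcup\{z_i\}$. The linear order on $X$ is extended to $X'$ by inserting each $z_i$ immediately above its original letter $x_i$ (so $z_i>x_i$ but $z_i$ is less than every element of $X$ strictly greater than $x_i$). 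The graph $G'=\langle X',E'\rangle$ keeps all edges of $G$ and joins $z_i$ to exactly the $G$-neighbors of $x_i$.

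Let $u'\in(X')^{*}$ be obtained from $u$ by replacing each bad-position letter $x_i$ with $z_i$. By the choice of order, $u'$ is associative Lyndon--Shirshov with the same bracket pattern as $u$, so $[u']=([\tilde u'],b)$ is Lyndon--Shirshov; since each $z_i$ inherits $b$-adjacency from $x_i$, we have $[u']\in S(G')$. The fresh letters are distinct and do not belong to $v=by_1\dots y_k c$, so every letter of $\tilde u'$ greater than $y_1$ appears exactly once and none belongs to $\{c,y_1,\dots,y_k\}$; hence $[u']\in\widehat{S}(G',v)$. By Lemma~\ref{main}, the composition is trivial relative to $S(G')$, i.e.
\[
([u'],[v])_{\tilde u' v}=\sum_{j}\alpha_j\,[a'_j s'_j b'_j]_{\bar s'_j},\qquad s'_j\in S(G'),\ a'_j\bar s'_j b'_j\prec \tilde u' v.
\]

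Finally, I would apply the Lie $R$-algebra homomorphism $\varphi:\mathrm{Lie}_R(X')\to\mathrm{Lie}_R(X)$ fixing $X$ and sending each $z_i$ to $x_i$. It satisfies $\varphi(([u'],[v])_{\tilde u'v})=([u],[v])_{\tilde u v}$, and because each $z_i$ was placed just above $x_i$ in the order, $\varphi$ is weakly deg-lex preserving on associative words, so $\varphi(a'_j\bar s'_j b'_j)\prec \tilde u v$. The main obstacle is Step three: rewriting each $\varphi(s'_j)$ as a trivial $S(G)$-combination. Since $G'$-adjacency of $z_i$ mirrors $G$-adjacency of $x_i$, the image $\varphi(s'_j)=([\varphi(\tilde s'_j)],\varphi(b^{*}_j))$ again has all letters of its ``head'' adjacent in $G$ to its last letter; by Lemma~\ref{id} it lies in $\mathrm{I}(G)$, and expanding it into the Lyndon--Shirshov basis via Lemma~\ref{less} produces only monomials whose underlying associative words are deg-lex bounded by $\varphi(\bar s'_j)$, each ending at a letter adjacent to all its predecessors and hence lying in $S(G)$. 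The delicate point here is that when distinct letters $z_i,z_j$ collapse to the same element of $X$ under $\varphi$, the resulting word need no longer be Lyndon--Shirshov, so one must reshuffle via Lemma~\ref{less}; the order choice guarantees that each resulting Lyndon--Shirshov monomial strictly does not exceed $\varphi(\bar s'_j)$ and still lies in $S(G)$. Combining this re-expansion with the pulled-back identity above gives the desired triviality of $([u],[v])_{\tilde u v}$ relative to $S(G)$.
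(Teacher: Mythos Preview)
Your overall strategy—enlarge the alphabet and the graph so that Lemma~\ref{main} applies, then specialize back to $G$—is exactly the paper's approach (it calls the enlargement ``$n$-copying'' and builds a graph $G_+$). The difference lies in how the specialization is executed, and that is where your plan has a genuine gap.

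You take the abstract triviality identity $\sum_j\alpha_j[a'_js'_jb'_j]_{\bar s'_j}$ over $G'$ and push it through the homomorphism $\varphi$. Three things go wrong. First, $\varphi$ is only \emph{weakly} order-preserving, so $a'_j\bar s'_jb'_j\prec\tilde u'v$ need not yield a strict inequality once distinct $z_i$'s collapse to the same letter of $X$. Second, the equality $\varphi\bigl(([u'],[v])_{\tilde u'v}\bigr)=([u],[v])_{\tilde uv}$ needs an argument, because the bracketings $[\,\cdot\,]_{\bar f}$ depend on the Lyndon--Shirshov structure of $\bar f$, which $\varphi$ alters. Third, and most seriously, your claim that re-expanding $\varphi(s'_j)$ via Lemma~\ref{less} produces Lyndon--Shirshov monomials ``each ending at a letter adjacent to all its predecessors and hence lying in $S(G)$'' is unsupported: Lemma~\ref{less} bounds the leading term but gives no control over which letter ends up in the last position, let alone that the result has the rigid shape $([\tilde s],b)$ with $b$ absent from $\tilde s$ that membership in $S(G)$ demands. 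The paper avoids all three issues by \emph{not} relying on the abstract conclusion of Lemma~\ref{main}. Instead it reopens the explicit computation (the analogue of (\ref{sum})--(\ref{smsum})) for the enlarged word, writes the composition as a sum of three concrete families of terms—those carrying a factor $\partial_c\partial_{\Delta_{r+1}}b$, those carrying $\partial_\Gamma c$ with $\Gamma\neq\varnothing$, and those of the form $(\,\cdot\,,b)$—and then checks term by term that, after replacing the copies by the original letters, each term still visibly contains a factor in $S(G)$ and is bounded by $[\tilde uv]$. The point is that the distinguished letters $b$, $c$, $y_1,\dots,y_k$ are untouched by the specialization, so the $S(G)$-structure of these particular factors survives; this structure cannot be recovered from the generic output of Lemma~\ref{main}. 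A smaller issue: ``insert $z_i$ immediately above $x_i$'' does not fix the relative order among $z_i$'s replacing occurrences of the \emph{same} letter, and an arbitrary choice need not keep $[u']$ Lyndon--Shirshov; the paper supplies an inductive construction of a workable ordering.
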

   \begin{proof}
     Let
     $G=\langle X,E\rangle$ be an arbitrary graph.
     $n$-\emph{coping vertex}
     $x \in X$ is the process of adding vertices
     $x_1,x_2,\dots x_{n}$ such that all of them are adjacent to each other, to the vertex
     $x$, and to the same vertices as
     $x$. In particular,
     $0$-coping any vertex of a graph gives the graph itself. Let
     us order the vertices of the obtained graph as follows:
     the order on
     $X$ is same and
     $x_i>a$ for all
     $i$ if and only if
     $x>a$ in the initial graph.

     For an arbitrary Lyndon--Shirshov word
     $[v]$ and an arbitrary letter
     $x$, let us denote by
     $X([v],x)$ the set of all letters of
     $[v]$, each of which is not less than
     $x$ and has more than one occurrence in it.

     Consider the set
     $X([u],y_1)$ and for each vertex
     $x$ in this set let us perform
     $m(x)$-coping, where
     $m(x)$ is the number of occurrences of
     $x$ in
     $[u]$. It is obvious that the graph obtained after all such
     $m(x)$-copings does not depend on the order of making them. Let us denote this
     graph by
     $G_+=\langle X_+,E_+\rangle$.

     Let
     $x$ be the greatest letter of
     $X([u],y_1)$.  We denote by
     $w'$ the word obtained from
     $w$ by replacing all occurrences of the letter
     $x$ by different letters
     $x_i$.

     Let us show that we can order the letters
     $x_i$ of
     $[u']$ in such a way that
     $[u']$ is a Lyndon--Shirshov word. We proceed by induction
     $\ell(u)$. If the length of
     $[u]$ is equal to
     $1$ or
     $2$ then the statement is obvious because such word cannot contain equal letters.
     Consider an arbitrary Lyndon--Shirshov word
     $[u]$ of the length greater than
     $2$. We have
     $[u]=([w_1],[w_2])$. Since the lengthes of
     $[w_1]$ and
     $[w_2]$ are less than the length of
     $[u]$, we can replace the occurrences of
     $x$ in these words by different letters
     $x_i$ and order these letters in both words
     $[w'_1]$ and
     $[w'_2]$ in such a way that either of them is a Lyndon-Shirshov word.

     If
     $\ell(w_1)=1$ (and so,
     $\ell(w'_1)=1$), then either
     $[w_1]>x$ or
     $[w'_1]=x_s$. In the first case, we are done, because all the
     letters
     $x_i$ are in
     $[w'_2]$. In the second case, let us suppose
     $x_s$ to be greater than other
     $x_i$'s (all other letters are in
     $w'_2$).

     If
     $\ell(w_1)>1$, then
     $[w_1]=([w_{1,1}],[w_{1,2}])$. There are two cases.
     If the first letter of
     $[w_{1,2}]$ or the first letter of
     $[w_2]$ is not
     $x$ then let us suppose that each letter
     $x_i$ of
     $[w'_1]$ is greater than all letters
     $x_i$ of
     $[w'_2]$. If the first letters of both
     $[w_{1,1}]$ and
     $[w_2]$ are equal to
     $x$, then let us suppose that the first letter of
     $[w'_2]$ is greater than the first letter of
     $[w'_{1,1}]$, but less than other letters
     $x_i$ of
     $[w'_{1}]$, and all other letters
     $x_i$ of
     $[w'_2]$ are less than all letters
     $x_i$ of
     $w'_1$.

     So, we obtain a required order on the letters
     $x_i$. Since
     $x$ is the greatest letter having more than one occurrence to
     $[u]$, there are no possibilities except ones considered above.

     Let us repeat this procedure for
     $[u']$ and the greatest letter of
     $X([u'],y_1)=X([u],y_1)\backslash \{x\}$ and so on. Finally, we obtain the word
     $[\check{u}]\in\widehat{S}(G_+,v)$. The same meaning we give to the notatiton
     $[\check{u}_i]$.

     Moreover, for each
     $x$, let us suppose that
     $x>x_j$ for all
     $j$. Then the
     $x$-pattern of
     $[u]$ coincides with the
     $x$-pattern of
     $[\check{u}]$ for any
     $x$ such that
     $x=y_i$ or
     $x=c$.

     Applying Lemma~%
\ref{main} to
     $[\check{u}]$,
     we obtain
     \begin{equation}
     \begin{split}\label{sum2}
       ([\check{u}],&[v])_{[\tilde{\check{u}}v]}  =
            \partial_{\Delta'} [\check{u}]-([\tilde{\check{u}}],\partial_{\Delta'} b)\\
           & = \sum_{(\Delta_1,\dots, \Delta_{r+1})\in \Omega_{r+1}(\Delta)}
           \hspace{-12mm}\bigl((\partial_{\Delta_1}[\check{u}_1],\dots,
           \partial_{\Delta_r}[\check{u}_r])_{p(\tilde{\check{u}},c)},\partial_c
           \partial_{\Delta_{r+1}}b\bigr)\\
     \end{split}
     \end{equation}
     \begin{equation*}
     \begin{split}
           &-([\tilde{\check{u}}],[v])+ \sum_{i=1}^{r}\
             \partial_{\Delta}\bigl(([\check{u}_1],\dots,[\check{u}_ic],\dots,[\check{u}_r])_{p(\tilde{\check{u}},c)},
           b\bigr)\\
           &-\sum_{\stackrel{(\Delta_1,\dots, \Delta_{i},\Gamma,\Delta_{i+1},\dots\Delta_{r+1})\in \Omega_{r+2}(\Delta)}
                  {\Gamma\neq \varnothing}}
            \hspace{-18mm}\bigl((\partial_{\Delta_1}[\check{u}_1],\dots,
            (\partial_{\Delta_i}[\check{u}_i],\partial_{\Gamma}c),\dots
            \partial_{\Delta_r}[\check{u}_r])_{p(\tilde{\check{u}},c)},\partial_{\Delta_{r+1}}b\bigr).
     \end{split}
     \end{equation*}
     From this equality, we can easily see that
     $([\check{u}],[v])_{[\tilde{\check{u}}v]}\equiv_{[\tilde{\check{u}}v]}0$. In particular,
     $([\check{u}],[v])_{[\tilde{\check{u}}v]}=0$ in
     $\mathcal{L}(G_+)$. Therefore, by construction of
     $G_+$, we obtain
     $([u],[v])_{[\tilde{u}v]}=0$ в
     $\mathcal{L}(G)$.
     We are left to prove that the words of the form
     $$
     \begin{array}{rl}
        \bigl((\partial_{\Delta_1}[u_1],\dots,
         \partial_{\Delta_r}[u_r])_{p(\tilde{u},c)},\partial_c
         \partial_{\Delta_{r+1}}b\bigr), & \text{if }\Delta_{r+1}\neq \Delta';\\
         \partial_{\Delta}\bigl(([u_1],\dots,[u_ic],\dots,[u_r])_{p(\tilde{u},c)},b\bigr);&\\
         \bigl((\partial_{\Delta_1}[u_1],\dots,(\partial_{\Delta_i}[u_i],\partial_{\Gamma}c),\dots
            \partial_{\Delta_r}[u_r])_{p(\tilde{u},c)},\partial_{\Delta_{r+1}}b\bigr),
            & \text{if }\Gamma\neq \varnothing
     \end{array}
     $$
     can be represented as sums of words each of which contains a word in
     $S(G)$ as a multiplier, and, moreover, representing these words as linear combinations
     of Lyndon--Shirshov words gives the words less than
     $[\tilde{u}v]$. Since
     $b$ is the second greatest letter of
     $[u]$, each summand of the first kind can be written as a linear combination of
     Lyndon--Shirshov words of the form
     $([w],\partial_c \partial_{\Delta_{r+1}}b)$, where
     $\partial_c \partial_{\Delta_{r+1}}b \in S(G)$. Analogously,
     each summand of the third kind can be written as a linear combination of Lyndon--Shirshov words each of which
     contains a multiplier of the form
     $\partial_{\Gamma}c$. Each such multiplier contains a
     product of the form
     $(c,y_i)\in S(G)$ because all words
     $[u_i]$ are
     $c$-indecomposable. Since
     $c$ is a letter, the greatest letter of each
     $[u_i]$ is greater than
     $c$ while the second greatest letter of it is not greater than
     $c$. Finally, for the summand of the second form representing each word
     $\bigl(([u_1],\dots,[u_ic],\dots,[u_r])_{p(\tilde{u},c)},b\bigr)$
     as a sum of Lyndon--Shirshov words, we obtain the sum of the
     terms of the form
     $([w],b)$, where
     $w$ is a Lyndon--Shirshov word each letter of which has the same number of occurrences in it as in
     $([u_1],\dots,[u_ic],\dots,[u_r])_{p(\tilde{u},c)}$. Consequently,
     all words of this form are in
     $S(G)$, since the vertex corresponding to
     $b$ is adjacent to the vertices corresponding to the letters of
     $u_i$ (%
     $i=1,2,\dots, r$) and to the vertex corresponding to
     $c$.

     By Lemma~%
\ref{less}, for the words of the first kind, all terms in the
     corresponding sum of Lyndon--Shirshov words are not greater
     than
     $[\tilde{u}v]$. Moreover, the greatest word of such sum is equal to
     $([\tilde{u}],[v])$. We can see that for each such summand there is its negative in the sum
     (note that we consider only some summands in the decomposition of
     $\bigl(((\dots([u],y_1),\dots,y_r),c\bigr)$, namely, only those of them, which are in the decomposition of the corresponding word
     $\bigl((\partial_{\Delta_1}[u_1],\dots,
         \partial_{\Delta_r}[u_r])_{p(\tilde{u},c)},\partial_c
         \partial_{\Delta_{r+1}}b)\bigr)$%
     $\bigr)$. Each word of the second and the third kinds has a decomposition to the sum of words not greater than
     $[u_1,\dots,u_ic,\dots,u_rv]$. Consequently, they are certainly less than
     $[\tilde{u}v]$.
   \end{proof}

   Let
   $[v]\in S(G)$ that is
   $[v]=([\tilde{v}],c)$, where
   $c\in X$. Let us show that this word can be represented in the form
   \begin{equation} \label{decom}
     [v]=\bigl((\dots (b,[v_1]),\dots),[v_k]),c\bigr),
   \end{equation}
   where
   $[v_i]\in LS(X)$ for
   $i=1,2\dots,k$,
   $b$ is a letter such that
   $b> c \geqslant v_k\geqslant \dots \geqslant v_1$.
   Let us proceed by induction on
   $\ell(\tilde{v})$. If
   $\tilde{v}=b$ then
   $k=0$ and the assertion follows. Let
   $\ell(\tilde{v})>1$. Then
   $[\tilde{v}]=([w_1],[w_2])$ and
   $b>w_2$ since
   $[v] \in LS(X)$. By the induction hypothesis, there is the
   decomposition
   $[w_1]=(\dots (b,[v_1]),\dots),[v_{k-1}])$, such that
   $b>v_{k-1}\geqslant \dots \geqslant v_1$. Suppose that
   $v_k=w_2$. Since
   $[v]\in LS(X)$, we have
   $b>c>v_k\geqslant v_{k-1}$, which is the required decomposition.

   Now, we can go on to the most general case.

   \begin{llll}\label{algen}
     Let
     $[u],[v]\in S(G)$ and let the last letter of
     $[u]$ be equal to the first letter of
     $[v]$. Then the composition of
     $[u]$ and
     $[v]$ is trivial relative to
     $S(G)$.
   \end{llll}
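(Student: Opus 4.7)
My plan is to reduce Lemma~\ref{algen} to Lemma~\ref{gen} by induction on the ``non-letter complexity'' $N(v):=\sum_{i=1}^{k}(\ell(v_i)-1)$ of the canonical decomposition (\ref{decom}) of $[v]$. The base case $N(v)=0$ corresponds to every $[v_i]$ being a single letter $y_i$, so $v=by_1\dots y_kc$ with $b>c>y_k\geqslant\dots\geqslant y_1$, which is precisely the hypothesis of Lemma~\ref{gen}.

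For the inductive step, I would mimic the proofs of Lemmas~\ref{main} and~\ref{gen}, replacing the letters $y_i$ by the Lyndon--Shirshov words $[v_i]$ throughout. Setting $\Delta=\{[v_1],\dots,[v_k]\}$ and $\Delta'=\Delta\cup\{c\}$, the key identity
$$([u],[v])_{[\tilde{u}v]}=\partial_c\partial_{[v_k]}\cdots\partial_{[v_1]}[u]-([\tilde{u}],[v])$$
still holds, and expanding the first term via the product rule for $\partial$ produces a direct analog of (\ref{sum}) and (\ref{sum2}). The ``diagonal'' summand corresponding to $\Delta_{r+1}=\Delta$ cancels the correction $-([\tilde{u}],[v])$ as before, and the remaining summands split into the same three families that appeared in the proof of Lemma~\ref{gen}.

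The main obstacle is verifying $S(G)$-membership of the correction factors now that the $[v_i]$ are not single letters. The ``$b$-end'' factors $\partial_c\partial_{\Delta_{r+1}}b$ still lie directly in $S(G)$, because $[v]\in S(G)$ forces $c$ to be adjacent to $b$ and to every letter of each $[v_{i_j}]$, and $c$ remains the second-largest letter of these products. The trickier point is that the analog of $(c,y_j)\in S(G)$ is the product $(c,[v_j])$, which is \emph{not} itself in $S(G)$ when $\ell(v_j)>1$. To handle these, I would expand each $(c,[v_j])$ as a sum of Lyndon--Shirshov words using Lemma~\ref{less}; since $c$ exceeds every letter of $[v_j]$ and is adjacent to each of them, every resulting summand contains a subword of the form $(c,\gamma)\in S(G)$ for some letter $\gamma$ of $[v_j]$. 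The residual terms either cancel pairwise as in Lemma~\ref{gen} or yield compositions with some $[v']$ of strictly smaller complexity $N(v')<N(v)$, so the inductive hypothesis closes the argument.
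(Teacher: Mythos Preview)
Your plan differs from the paper's and has a gap. The paper does not induct on $N(v)$; instead it reduces directly to Lemma~\ref{main} by a \emph{letter-substitution trick}. Having written $[v]=\bigl((\dots(b,[v_1]),\dots,[v_k]),c\bigr)$ as in~(\ref{decom}), the paper adjoins fresh vertices $y_1,\dots,y_k$ to form an enlarged graph $G'$ in which $y_i$ is adjacent to exactly those $x\in X$ that are adjacent to every letter of $v_i$. Since $[v]\in S(G)$ forces $c$ to be adjacent to every letter of $\tilde v$, the vertex $c$ is adjacent to each $y_i$, so $\bigl((\dots(b,y_1),\dots,y_k),c\bigr)\in S(G')$ and Lemma~\ref{main} supplies an explicit trivializing decomposition of its composition with $[u]$ relative to $S(G')$. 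One then substitutes each $y_i$ back by $[v_i]$; because the trivializing terms from Lemma~\ref{main} have a visible shape (either a factor $\partial_c\partial_{\Delta_0}b$ with $\Delta_0\subsetneq\{y_1,\dots,y_k\}$, or a factor $([w],b)$ with every letter of $w$ adjacent to $b$), the substitution still produces terms containing $S(G)$-subwords smaller than $[\tilde u v]$. No induction on $N(v)$ is needed.

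The gap in your sketch is the last sentence. You set up an induction on $N(v)$ but never invoke the hypothesis concretely: the phrase ``residual terms\dots yield compositions with some $[v']$ of strictly smaller complexity'' has no clear meaning here, since expanding $([u],[v])_{[\tilde u v]}$ produces Lie polynomials, not new compositions of pairs from $S(G)$. If your direct computation with $\partial_{[v_i]}$ in place of $\partial_{y_i}$ really lands entirely in the three families from Lemma~\ref{gen}, the induction is superfluous and should be dropped; if some terms escape those families, you must say which ones and exactly how the hypothesis disposes of them. Separately, your ``key identity'' requires $\partial_{[v_1]}[u]$ to equal the Lyndon--Shirshov expansion of $([u],[v_1])$, which holds only when the $[v_1]$-indecomposable factors of $[u]$ have pairwise distinct leading letters---a condition you have not arranged (the paper secures the analogous hypothesis at the start of its proof via the copying trick of Lemma~\ref{gen}).
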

   \begin{proof}
     By Lemma~%
\ref{gen}, without loss of generality, we can suppose that all the
     letters of
     $\tilde{u}$, which are greater than the greatest letter of
     $v_1$, have no more than one occurrence to
     $\tilde{u}$. In this case,
     $\partial_{\Gamma}[u]$ is a sum of Lyndon--Shirshov words,
     where
     $\Gamma=\{[v_1],\dots,[v_k],c\}$. It is so because
     $[u]=([u_1],\dots, [u_r],b)_{v_1}$ and all the greatest letters
     of the words
     $[u_i]$ are distinct and are not equal to the greatest letter of
     $[v_1]$. Consider the graph
     $G'=\langle X',E'\rangle$, such that
     $X'=X\cup\{y_1,\dots, y_k\}$, where
     $X\cap \{y_1,\dots, y_k\}=\varnothing$ and
     $E'$ is obtained from
     $E$ by adding to it all edges
     $\{x,y_i\}$, where
     $x\in X$ and
     $x$ is adjacent to all vertices such that the corresponding letters are in
     $u_i$. It is obvious that, in this case,
     $c$ is adjacent to the vertices
     $y_1,\dots,y_k$. Consequently, Lemma~%
\ref{main} implies the composition of
     $u$ and
     $\bigl(((\dots(b,y_1),\dots),y_k),c\bigl)$ is trivial
     relative to
     $S(G')$. It can be seen from the proof of this lemma that for
     the new set of vertices, the composition of
     $[u]$ and
     $[v]$ can be represents as a sum of the words of the form
     $([u_0], \partial_c \partial_{\Delta_0 }b)$
     or
     $\bigl((\dots((([u_1],\dots, ([u_i],c),\dots,[u_r])_{p(\tilde{u},c)}, b), y_1),\dots), y_k \bigr)$,
     where
     $\Delta=\{y_1,\dots,y_k\}$ and
     $\Delta_0$ is a proper subset of
     $\Delta$. It is clear that replacing
     $y_i$ by corresponding
     $[v_i]$ in these words, we obtain the words, containing subwords in
     $S(G)$ less than
     $[\tilde{u}v]$. Consequently,
     $([u],[v])_{[\tilde{u}v]}\equiv_{[\tilde{u}v]} 0$ and we are done.
   \end{proof}

   Let
   $[v]$ be a Lyndon---Shirshov word. We say that a Lyndon---Shirshov word
   $[u]$ is \emph{a subword} of
   $[v]$, if
   $u$ is a subword of
   $v$.

   Now, we can formulate the main results of this paper.

   \begin{ttt}\label{result}
     Let
     $G=\langle X, E\rangle$ be an undirected graph. Then a linear basis of
     $\mathcal{L}(G)$ consists of all non-associative Lyndon--Shirshov
     not containing subwords of the form
     $[u]=([\tilde{u}],b)$, such that the greatest letter of
     $u$ has exactly one occurrence in it,
     $b$ is the second largest letter of
     $u$ and
     $G$ contains all edges of the form
     $\{b,x\}$, where
     $x$ is a letter of
     $\tilde{u}$.
   \end{ttt}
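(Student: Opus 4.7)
The plan is to apply the CD-Lemma to the set $S(G)$ and then identify the resulting reduced words with those described in the theorem.

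First, I will verify that $\mathrm{Id}(S(G))$ coincides with the ideal $\mathrm{I}(G)$. Lemma~\ref{id} supplies $S(G) \subseteq \mathrm{I}(G)$, and the reverse inclusion is immediate from the observation that for every edge $\{x_i, x_j\} \in E$ with $x_i > x_j$, the element $(x_i, x_j)$ belongs to $S(G)$ (take $\tilde{u} = x_i$ and $b = x_j$; the adjacency and non-containment requirements are trivially satisfied). Hence $\mathcal{L}(G) \cong \mathrm{Lie}_{R}(X)/\mathrm{Id}(S(G))$.

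Next, I will check that $S(G)$ is a Gr\"{o}bner--Shirshov basis. The paragraph preceding Lemma~\ref{main} observes that, because every $[u] \in S(G)$ ends in the second-largest letter of $u$, the only compositions between members of $S(G)$ that need to be considered are compositions of intersection in which the last letter of the larger word equals the first letter of the smaller (no nontrivial compositions of inclusion can arise). Lemma~\ref{algen}, established through the chain Lemma~\ref{main} $\to$ Lemma~\ref{gen} $\to$ Lemma~\ref{algen}, shows that all such intersection compositions are trivial relative to $S(G)$. Thus $S(G)$ is a Gr\"{o}bner--Shirshov basis and, by the CD-Lemma, the set of $S(G)$-reduced Lyndon--Shirshov words is a linear basis of $\mathcal{L}(G)$.

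It remains to identify this basis with the one stated in the theorem, i.e., to show that a Lyndon--Shirshov word $[w]$ contains some $u$ with $[u] \in S(G)$ as an associative subword if and only if it contains some $u'$ with $[u']$ in the theorem's forbidden family. One direction is immediate: the adjacency requirement $\{b, x\} \in E$ for every letter $x$ of $\tilde{u}$ forces $b \notin \tilde{u}$ (there are no loops in $G$), and the remaining conditions on the theorem's family are precisely those defining $S(G)$, so every such $[u']$ already lies in $S(G)$. For the converse, given $[u] \in S(G)$ with greatest letter $a$, let $u'$ be the suffix of the associative word $u$ beginning at the last occurrence of $a$. Every letter of $u$ strictly after this occurrence is strictly less than $a$, so $u'$ is an associative Lyndon--Shirshov word in which $a$ appears exactly once; writing $u' = a\beta b$, the letters of $a\beta$ form a subset of those of $\tilde{u}$, so $b$ is adjacent to each of them and is the second-largest letter of $u'$. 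Consequently $[u']$ satisfies the theorem's conditions, and since $u'$ is a subword of $u$, containment of $u$ in $w$ implies containment of $u'$, completing the identification.

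The substantive obstacle is the triviality of all compositions (culminating in Lemma~\ref{algen}), which depends on the preservation of $d$-patterns under the derivations $\partial_d$ (Lemma~\ref{drazl}) and on the vertex-coping reduction of Lemma~\ref{gen}. Once those are in hand, the theorem follows from a direct invocation of the CD-Lemma together with the short associative-word bookkeeping just described.
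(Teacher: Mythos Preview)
Your proof follows the paper's approach exactly: establish $\mathrm{Id}(S(G))=\mathrm{I}(G)$ via Lemma~\ref{id} and the observation $(x_i,x_j)\in S(G)$, invoke Lemmas~\ref{main}--\ref{algen} to conclude $S(G)$ is closed under compositions, and apply the CD-Lemma. The only difference is your final identification step, which is more elaborate than necessary: the paper's remark just after the definition of $S(G)$ already shows that the greatest letter of any $[u]\in S(G)$ occurs exactly once, so the forbidden family in the theorem \emph{equals} $S(G)$ outright (your suffix $u'$ is always $u$ itself), and no separate subword-matching argument is needed.
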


   \begin{proof}
     By Lemma~%
\ref{id}, the set
     $S(G)$ (consisting of the words of the form
     $[u]=([\tilde{u}],b)$, such that the greatest letter of
     $[u]$ has exactly one occurrence in it,
     $b$ is the second greatest letter of it and all vertices
     corresponding to the letters of
     $[\tilde{u}]$ are adjacent to the vertex corresponding to
     $b$) is a subset of the ideal generated by the subset of the ideal generated by the words
     $(x,y)$, where
     $\{x,y\}\in E$.

     On the other hand, since
     $(x,y)\in S(G)$, for
     $\{x,y\}\in E$, we obtain
     $I(G)\subseteq \mathrm{Id}(S(G))$.

     By Lemma~%
\ref{main}, Lemma~%
 \ref{gen}, and Lemma~%
 \ref{algen},
     $S(G)$ is complete under compositions. So the theorem holds.
   \end{proof}

   It clearly follows from Theorem~%
\ref{result} that the equality problem is algorithmically solvable
   for the partially commutative Lie algebras. Indeed, any
   non-associative word can be represented as a linear combination
   of Lyndon---Shirshov words. If a summand of this linear
   combination (denote it by
   $\alpha [v]$, where
   $\alpha\in R$) contains a subword
   $[u]$, then by computing corresponding compositions of inclusion (see Definition~%
\ref{inclus}) we can represent
   $[v]$ as a linear combination of words not containing elements in
   $S(G)$ as subwords.

   Let
   $G=\langle X, E\rangle$ be an undirected graph. Denote by
   $\mathcal{L}(G,n)$ the partially commutative nilpotent Lie
   algebra with the level of nilpotence
   $n$ corresponding to
   $G$, i.e. the algebra
   $\mathcal{L}(G)/I$, where
   $I$ is an ideal consisting of all words of the length not less than
   $n$.

   \begin{ttt}
     Let
     $G=\langle X, E\rangle$ be an undirected graph. Then a linear basis of
     $\mathcal{L}(G,n)$ consists of all non-associative
     Lyndon--Shirshov words of the length not greater than
     $n-1$ not containing subwords of the form
     $[u]=([\tilde{u}],b)$, such that the greatest letter of
     $u$ has exactly one occurrence in it,
     $b$ is the second largest letter of
     $u$ and
     $G$ contains all edges of the form
     $\{b,x\}$, where
     $x$ is a letter of
     $\tilde{u}$.
   \end{ttt}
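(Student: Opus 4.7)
The plan is to deduce this from Theorem~\ref{result} by exploiting the fact that the basis of $\mathcal{L}(G)$ produced there is homogeneous with respect to word length. Write $\mathcal{L}(G,n) = \mathcal{L}(G)/I$, where $I$ is the ideal spanned by all Lie monomials of length $\geqslant n$. Let $B$ denote the basis of $\mathcal{L}(G)$ described in Theorem~\ref{result} and, for each $k\geqslant 1$, let $B_k\subseteq B$ consist of those basis words of length $k$.

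First, I would verify the length-grading: every word $[u]\in B$ has a well-defined length $\ell(u)$, so $\mathcal{L}(G)=\bigoplus_{k\geqslant 1}\mathrm{span}\,B_k$. Moreover, the Lie bracket sends $\mathrm{span}\,B_i\otimes\mathrm{span}\,B_j$ into $\bigoplus_{k\geqslant i+j}\mathrm{span}\,B_k$, because when one expands $([u],[v])$ for $[u],[v]\in B$ as a linear combination of Lyndon--Shirshov words via Lemma~\ref{less} and then reduces modulo $S(G)$ using compositions of inclusion, every Lyndon--Shirshov word appearing has length exactly $\ell(u)+\ell(v)$, and each reduction step preserves length as well.

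Second, I would identify $I$ inside this decomposition. On the one hand, $I\supseteq\bigoplus_{k\geqslant n}\mathrm{span}\,B_k$, since every $[u]\in B_k$ with $k\geqslant n$ is itself a Lie monomial of length $\geqslant n$. On the other hand, $I$ is spanned by Lie monomials of length $\geqslant n$, and expanding any such monomial in the basis $B$ yields (by the previous paragraph) a combination of elements of $\bigcup_{k\geqslant n}B_k$. Hence $I=\bigoplus_{k\geqslant n}\mathrm{span}\,B_k$, and therefore
\[
\mathcal{L}(G,n)=\mathcal{L}(G)/I\;\cong\;\bigoplus_{k=1}^{n-1}\mathrm{span}\,B_k,
\]
which has as a basis precisely the elements of $B$ of length at most $n-1$, i.e.\ exactly the set described in the statement.

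The only real point requiring care is the length-homogeneity claim: that reducing $([u],[v])$ modulo $S(G)$ (via the rewriting procedure guaranteed by the CD-Lemma applied to the basis $S(G)$ established in Theorem~\ref{result}) produces only Lyndon--Shirshov words of the same total length $\ell(u)+\ell(v)$. This will follow from inspection of the compositions of inclusion used in the reduction, since every element of $S(G)$ is itself length-homogeneous and both $[vsb]_{\overline{s}}$ and the substitution $[v]\mapsto[v\overline{s}w]_{\overline{s}}\mapsto[vsw]_{\overline{s}}$ preserve length. Everything else then follows by pure bookkeeping from Theorem~\ref{result}.
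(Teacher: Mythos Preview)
Your argument is correct, and it takes a genuinely different route from the paper's.

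The paper proceeds by staying entirely inside the Gr\"obner--Shirshov machinery: it enlarges $S(G)$ by adjoining all Lyndon--Shirshov words of length at least $n$ and asserts that this enlarged set is still closed under compositions, citing Lemmas~\ref{main}, \ref{gen}, and \ref{algen} for the $S(G)$--$S(G)$ compositions and leaving the remaining compositions (those involving a long word) implicit. The CD-Lemma then yields the basis directly.

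You instead exploit the grading. Since the defining relations $(x_i,x_j)=0$ are homogeneous of degree~$2$, the algebra $\mathcal{L}(G)$ inherits a length grading from $\mathrm{Lie}_R(X)$, the basis $B$ of Theorem~\ref{result} is compatible with it, and the nilpotency ideal $I$ is precisely $\bigoplus_{k\geqslant n}\mathrm{span}\,B_k$. This bypasses any new composition checks. Your verification that reductions preserve length is the right thing to check, and it is indeed immediate once one notes that every $s\in S(G)$ is a single monomial (so $\overline{s}=s$ and $[asb]_{\overline{s}}$ is a bracketed word of the same length as $u=asb$); in fact this is just a special case of the general principle that a Gr\"obner--Shirshov basis consisting of homogeneous elements yields a homogeneous reduced basis.

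One minor remark: you write that the bracket sends $\mathrm{span}\,B_i\otimes\mathrm{span}\,B_j$ into $\bigoplus_{k\geqslant i+j}\mathrm{span}\,B_k$, but your own justification actually gives the sharper statement that it lands in $\mathrm{span}\,B_{i+j}$ exactly. Either version suffices for your argument. Your approach has the advantage of being shorter and of making transparent why the nilpotent case follows formally from Theorem~\ref{result}; the paper's approach has the advantage of being self-contained within the composition framework used throughout.
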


   \begin{proof}
     The proof of this theorem is analogous to the proof of the last one. We only have to note that
     $\mathcal{L}(G,n)=\mathrm{Lie}_{R}(X) / \mathrm{I}(G,n)$, where
     $\mathrm{I}(G,n)$ is the ideal of the free Lie algebra generated by
     $S(G)$ together with the set of Lyndon--Shirshov words of the length less than
     $n$. By Lemma~%
\ref{main}, Lemma~%
 \ref{gen}, and Lemma~%
 \ref{algen}, this set is complete under composition. So, we are done.
   \end{proof}

   \noindent
   {\bfseries Acknowledgments:} The author is very grateful to Prof.~E.\,I.\,Timoshenko for very helpful
   criticism and suggestions about this paper.

 \end{document}